\newtheoremstyle{theorem}
  {12pt}          
  {12pt}  
  {\sl}  
  {\parindent}     
  {\bf}  
  {. }    
  { }    
  {}     
\theoremstyle{theorem}
\newtheorem{theorem}{Theorem}
\newtheorem{remark}[theorem]{Remark}
\newtheorem{proposition}[theorem]{Proposition}
\newtheorem{lemma}[theorem]{Lemma}
\newcommand{\aG}{\alpha}
\newcommand{\sG}{\sigma}
\newcommand{\bG}{\beta}
\newcommand{\dG}{\delta}
\newcommand{\bN}{\mathbb{N}}
\title[On the distance between perfect numbers.]{On the distance between perfect numbers.}
\author{Ph. Ellia}
\address{Dipartimento di Matematica, 35 via Machiavelli, 44100 Ferrara}
\email{phe@unife.it}
\subjclass[2010] {11A99} \keywords{Perfect numbers, distance, triangular numbers.}
\begin{document}
\maketitle

\begin{abstract}
If $n,m$ are distinct perfect numbers and if $|n-m|=\dG$ it is known that $\dG >1$. We show that $\dG \nmid n$. Then we prove that there exist infinitely many triangular numbers $\dG \equiv 3\pmod{12}$ such that $|x-y|=\dG$ has no solutions with both $x$ and $y$ perfect numbers.
\end{abstract}

\section{Introduction}	

An integer $n \in \bN$ is said to be perfect if $\sG (n)=2n$ where $\sG$ is the sum of divisors function. By results of Euler every even perfect number has the form $n = 2^{p-1}(2^p-1)$ where $2^p-1$ is prime, whereas every odd perfect number is of the form $n = q^{4b+1}.\prod p_i^{2a_i}$, $q,p_i$ distinct primes, $q \equiv 1 \pmod{4}$; in particular if $n$ is an odd perfect number, then $n \equiv 1 \pmod{4}$. It is still unknown if odd perfect numbers exist (for some recent results see for example \cite{N}, \cite{OR}).

In their very stimulating paper \cite{LP}, Luca and Pomerance show that if the $abc$-conjecture is true, then the equation $x-y=\dG$ has a finite number of solutions with $x,y$ perfect numbers when $\dG$ is odd or if $x$ and $y$ are both squarefull.

An obvious remark is in order here: if one could prove that the distance between two perfect numbers is always even, then it would follow that any perfect number is even.

It has also been proved that if $n$ and $m$ are perfect, then $|n-m| > 1$ (cf \cite{LC}).

Here we show that if $n$ and $n+\dG$, $\dG > 1$, are both perfect, then $\dG \nmid n$, see Proposition \ref{tNonDiv}. 

It is known that modular properties imply that an integer $\dG \equiv \pm 1\pmod{12}$ can't be the distance between two perfect numbers (Remark \ref{-+1mod12}). It is natural to ask if there are other infinite series of numbers $\dG$ such that the equation $|x-y| = \dG$ has no solutions with $x, y$ both perfect.

We show (Theorem \ref{Thm-infty}) that there exist infinitely many triangular numbers $\dG\equiv 3\pmod{12}$ with this property.

\section{A property of the distance between two perfect numbers.}

We first recall the following (\cite{LC}):

\begin{lemma}
\label{Dist>1}
If $n$ and $m$ are perfect numbers and if $\dG :=|n-m|$, then $\dG > 1$.
\end{lemma}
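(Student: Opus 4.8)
The plan is to prove the equivalent statement that no two consecutive integers are both perfect, arguing by contradiction: suppose $n$ and $n+1$ are both perfect for some $n\in\bN$. Consecutive integers have opposite parity, so exactly one of $n,n+1$ is even and the other odd; since it is not known whether odd perfect numbers exist, the argument must survive both assignments. Recall from the introduction that an even perfect number equals $2^{p-1}(2^p-1)$ with $2^p-1$ prime, that an odd perfect number $m$ satisfies $m\equiv 1\pmod 4$, and that such an $m$ has the shape $m=q^{4b+1}\prod p_i^{2a_i}$ with $q,p_i$ distinct primes.

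The case where $n$ is odd and $n+1$ even is immediate: then $n\equiv 1\pmod 4$ forces $n+1\equiv 2\pmod 4$, but the even perfect number $2^{p-1}(2^p-1)$ has $2$-adic valuation exactly $p-1$, so it is $\equiv 2\pmod 4$ only when $p=2$, i.e.\ only when it equals $6$; hence $n+1=6$ and $n=5$, which is not perfect.

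The substantive case is $n$ even and $n+1$ odd. Here $n+1\equiv 1\pmod 4$ forces $n\equiv 0\pmod 4$, which by the valuation remark rules out $n=6$; thus $n=2^{p-1}(2^p-1)$ with $p$ an odd prime, and a short computation ($2^{p-1}\equiv 1$ and $2^p-1\equiv 1\pmod 3$) gives $n\equiv 1\pmod 3$, whence $n+1\equiv 2\pmod 3$. So everything reduces to the auxiliary claim that \emph{an odd perfect number is never congruent to $2$ modulo $3$}; this is the only step I expect to require real work.

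To prove the claim, let $m$ be odd and perfect. If $3\mid m$ there is nothing to prove, so assume $3\nmid m$ and write $m=q^{4b+1}\prod p_i^{2a_i}$. For a prime $p\equiv 2\pmod 3$ and an odd exponent $e$, pairing consecutive terms of $\sG(p^e)=1+p+\cdots+p^e$ shows $(1+p)\mid\sG(p^e)$, so $3\mid\sG(p^e)$; since $\sG(m)=2m$ is prime to $3$, no prime $\equiv 2\pmod 3$ can divide $m$ to an odd power. The unique prime dividing $m$ to an odd power is $q$, and $q\neq 3$, so $q\equiv 1\pmod 3$. Reducing mod $3$, each $p_i^{2a_i}$ is a square, hence $\equiv 1$, and $q^{4b+1}\equiv 1$, so $m\equiv 1\pmod 3$. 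Feeding this back into the even/odd case produces the contradiction and shows $\dG>1$.
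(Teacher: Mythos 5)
Your argument is correct. Note that the paper does not actually prove this lemma: it only recalls it from the Luca--Coghlan reference, so there is no in-paper proof to compare against. What you have written is a complete, self-contained proof along the standard lines: both of your cases amount to the congruence facts the paper later invokes in Remark \ref{-+1mod12} (even perfect numbers are $6$ or $\equiv 4 \pmod{12}$; odd perfect numbers are $\equiv 1$ or $9 \pmod{12}$), and a consecutive pair cannot realize both. The only step needing real work, your auxiliary claim that an odd perfect number is never $\equiv 2 \pmod 3$, is proved correctly: for $3\nmid m$ the factor $\sG(p^e)$ with $e$ odd is divisible by $1+p$, so no prime $\equiv 2\pmod 3$ can carry an odd exponent, forcing $q\equiv 1\pmod 3$ and hence $m\equiv 1\pmod 3$; this is exactly the mod-$3$ half of Touchard's theorem, reproved from scratch rather than cited. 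In short: correct, and in substance the same argument as the cited solution, with the added value of being self-contained.
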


We have:

\begin{lemma}
\label{t even}
Let $n$ and $n+\dG$ be perfect numbers. If $\dG \geq 2$ is even, then $\dG \nmid n$.
\end{lemma}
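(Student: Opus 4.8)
We want to show that if $n$ and $n+\delta$ are both perfect and $\delta\ge 2$ is even, then $\delta\nmid n$. The plan is to argue by contradiction: suppose $\delta\mid n$, say $n=k\delta$. Then $n+\delta=(k+1)\delta$, so $\delta$ divides both perfect numbers. Write $d$ for $\delta$ and look at the two cases for $n$ being even or odd (and similarly for $n+\delta$); since $\delta$ is even, $n$ and $n+\delta$ have the same parity, so either both are even perfect numbers or both are odd perfect numbers. The odd case is immediately killed, because two odd perfect numbers are both $\equiv 1\pmod 4$, so their difference $\delta\equiv 0\pmod 4$; meanwhile an odd perfect number is never divisible by $4$ (in fact $n\equiv 1\pmod 4$), so $\delta\nmid n$ at once. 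Hence the real content is the even case.

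**The even case.** Here $n=2^{p-1}(2^p-1)$ and $n+\delta=2^{q-1}(2^q-1)$ with $2^p-1$, $2^q-1$ Mersenne primes and (without loss of generality) $p<q$, so $q\ge p+1$. Since $\delta\mid n$ and $\delta$ is even, we get $2\mid\delta\mid n$, and since $\delta\mid n+\delta$ as well, $\delta$ divides $(n+\delta)-n$'s... more usefully, $\delta$ divides $n$ and divides $n+\delta$, so the $2$-adic valuation of $\delta$ is at most $v_2(n)=p-1$ and at most $v_2(n+\delta)=q-1$; the binding constraint is $v_2(\delta)\le p-1$. Now compare sizes: $\delta=2^{q-1}(2^q-1)-2^{p-1}(2^p-1)$, and if $\delta\mid n=2^{p-1}(2^p-1)$ with $n<n+\delta$, then $\delta\le n$, i.e. the gap between the two perfect numbers is at most the smaller one. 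But consecutive even perfect numbers grow roughly like a fourth power: from $q\ge p+1$ one gets $n+\delta=2^{q-1}(2^q-1)\ge 2^p(2^{p+1}-1)>2\cdot 2^{p-1}(2^p-1)\cdot\frac{2^{p+1}-1}{2^p-1}>2n$, hence $\delta=(n+\delta)-n> n$, contradicting $\delta\le n$. So in fact $\delta>n$ always when $p<q$, which already forbids $\delta\mid n$ unless $\delta=n$ exactly — and $\delta=n$ would force $n+\delta=2n$, i.e. $2n$ perfect, which is impossible since $\sigma$ is not multiplicative-compatible with that (more directly, $2n=2^p(2^p-1)$ is not of Euclid's form).

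**Where the difficulty lies and cleaning up.** The one gap in the sketch above is the edge case $p<q$ but $q=p+1$ together with whether $\delta=n$ can genuinely be excluded; I would handle $\delta=n$ by noting $n+\delta=2n$ and checking $\sigma(2n)=\sigma(2^p(2^p-1))=(2^{p+1}-1)\cdot 2^p=(2^{p+1}-1)2^p$, which equals $2\cdot 2n=2^{p+2}-2^{p+1}$ only if $2^{p+1}-1=2^{p+1}-2$, absurd; so $\delta\ne n$. The main obstacle is making the size comparison airtight with the correct strict inequalities — i.e., rigorously proving that for even perfect numbers $n<m$ one has $m>2n$, so that $m-n>n$, hence $\delta=m-n\nmid n$. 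This is elementary (it reduces to $2^{q-1}(2^q-1)>2^p(2^p-1)$ for $q\ge p+1$, which follows from $2^{q-1}\ge 2^p$ and $2^q-1>2^p-1$ with at least one strict), but it must be stated carefully. Once that inequality is in hand, together with the trivial odd-perfect-number observation, the proof of Lemma~\ref{t even} is complete.
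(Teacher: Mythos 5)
Your proof is correct and takes essentially the same route as the paper's: the odd case is immediate, and in the even case both numbers are of Euclid's form, whence $n+\delta>2n$, so $\delta>n$ and therefore $\delta\nmid n$. The extra handling of the edge case $\delta=n$ (and the mod~$4$ detour in the odd case) is superfluous, since the strict inequality $\delta>n$ already excludes divisibility outright.
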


\begin{proof} This is clear if $n$ is odd. So assume $n=2^{p-1}(2^p-1)$, $n+\dG = 2^{q-1}(2^q-1)$, where $P =2^p-1$ and $Q =2^q-1$ are primes and $q > p$. Then $n+\delta > 2n$, which implies that $\dG > n$, so $\dG \nmid n$.
\end{proof}

\begin{lemma}
\label{todd}
Let $n$, $n+\dG$, be perfect numbers. If $|\dG |>1$ is odd, then $\dG \nmid n$.
\end{lemma}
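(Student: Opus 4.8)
The plan is to dispose of the parity cases separately, since the structure theorems for perfect numbers are very different for even and odd numbers. If $\dG$ is odd, then $n$ and $n+\dG$ have opposite parities, so exactly one of them is even and the other is odd. Since the existence of odd perfect numbers is open, the real content is the case where, say, $n$ is even and $m=n+\dG$ is odd (the case $n$ odd, $m$ even being symmetric after replacing $n$ by $m$ and $\dG$ by $-\dG$, i.e. looking at $m \mid m-\dG$). So first I would reduce to: $n=2^{p-1}(2^p-1)$ with $2^p-1$ prime, $m$ odd perfect, and $\dG=|n-m|$ odd with $\dG>1$; assume for contradiction $\dG \mid n$.

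Next I would exploit the congruence $m \equiv 1 \pmod 4$ valid for every odd perfect number (quoted in the introduction), together with the parity/residue of $n=2^{p-1}(2^p-1)$. For $p=2$ we have $n=6$, and the divisors of $6$ are $1,2,3,6$; none of these can be $\dG$ because $|6-m|\in\{1,2,3,6\}$ would force $m\in\{5,4,8,9,3,12,0\}$, and the only odd candidate congruent to $1 \bmod 4$ among small values is ruled out since no odd perfect number is that small (indeed any odd perfect number exceeds $10^{1500}$, but even elementarily $m>6$ and $m\equiv1\pmod4$ kills $\dG\in\{1,3\}$ by the $\bmod 4$ obstruction once $n\equiv 2\pmod 4$). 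For $p\geq 3$, $n=2^{p-1}(2^p-1)\equiv 0\pmod 4$, so the odd divisors of $n$ are exactly the divisors of the prime $2^p-1$, namely $1$ and $2^p-1$ itself. Since $\dG>1$ and $\dG$ is odd and $\dG\mid n$, this forces $\dG=2^p-1=P$. Then $m=n\pm P = P(2^{p-1}\pm 1)$. The plan is to show this contradicts $m$ being perfect: compute $\sG(m)$ assuming $\gcd(P,2^{p-1}\pm1)=1$ (true since $P$ is an odd prime not dividing $2^{p-1}\pm1$ except in degenerate tiny cases one checks by hand), getting $\sG(m)=\sG(P)\sG(2^{p-1}\pm1)=(P+1)\sG(2^{p-1}\pm1)=2^p\,\sG(2^{p-1}\pm1)$, which is divisible by $2^p$ and hence even with high $2$-adic valuation, whereas $2m=2P(2^{p-1}\pm1)$ has $2$-adic valuation exactly $1$ (as $P$ and $2^{p-1}\pm1$: note $2^{p-1}+1$ is odd for $p\geq 3$, and $2^{p-1}-1$ is odd). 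This parity mismatch $v_2(\sG(m))\geq p\geq 3 > 1 = v_2(2m)$ is the contradiction, so $m$ cannot be perfect.

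I should also handle the mirror case cleanly: if instead $n$ is odd perfect and $m=n+\dG$ is even with $\dG\mid n$, then write $m=2^{q-1}(2^q-1)$ and observe $m=n+\dG$ with $\dG\mid n$; here one wants $\dG\nmid n$. Set $d=\dG$; if $d\mid n$ and $d$ odd, I would instead look at $n = m-\dG$ and note $\dG \mid n$ means $\dG \mid (m - \dG)$ hence $\dG \mid m$; but $m=2^{q-1}(2^q-1)$ has odd part dividing the prime $2^q-1$, so $\dG \in \{1, 2^q-1\}$, and $\dG>1$ gives $\dG = 2^q-1 = Q$. Then $n = m - Q = Q(2^{q-1}-1)$, and the same $\sG$-computation as above (with $\gcd(Q,2^{q-1}-1)=1$ for $q\geq 2$, the case $q=2$, $n=3-\dG<0$ being impossible, and $q=3$, $n=Q\cdot 3=21$ not perfect) yields $\sG(n)=(Q+1)\sG(2^{q-1}-1)=2^q\sG(2^{q-1}-1)$; but $n$ odd perfect needs $\sG(n)=2n$ with $v_2(2n)=1$, contradicting $v_2(\sG(n))\geq q\geq 2$.

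The main obstacle I anticipate is not any single deep step but rather the bookkeeping of small exceptional values of $p$ (or $q$) where the coprimality $\gcd(P,2^{p-1}\pm1)=1$ could fail or where $2^{p-1}\pm1$ might itself be $1$ — e.g. $p=2$ gives $2^{p-1}-1=1$, so $m=n-P=6-3=3$, which is not perfect, handled directly. One must verify that in every such edge case either the resulting $m$ (or $n$) is manifestly not perfect, or the number in question is negative/zero, or $\dG\leq 1$. Once those finitely many cases are cleared, the $2$-adic valuation argument is robust and uniform. A secondary point to be careful about: when I claim the odd part of $2^{p-1}(2^p-1)$ is the prime $2^p-1$, I am using that $2^p-1$ is prime (Euclid–Euler), so its only divisors are $1$ and itself — this is exactly what pins $\dG$ down to a single value and makes the argument work.
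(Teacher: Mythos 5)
Your proof is correct, and the opening reduction is exactly the paper's: since $\dG$ is odd, one of the two numbers is even, say $2^{p-1}Q$ with $Q=2^p-1$ prime; an odd divisor $\dG>1$ of it must equal $Q$; hence the odd perfect partner is $Q(2^{p-1}\pm 1)$ with the cofactor strictly between $0$ and $Q$ (for $p\geq 3$). Where you diverge is the final contradiction. The paper invokes Euler's structure theorem for odd perfect numbers: since $Q\equiv 3\pmod 4$, $Q$ is not the special prime, so it must occur to an even power, i.e. $Q^2\mid n+\dG$, which is impossible as $Q$ exactly divides $Q(2^{p-1}\pm1)$. You instead compute $\sG$ directly: $\sG\bigl(Q(2^{p-1}\pm1)\bigr)=(Q+1)\,\sG(2^{p-1}\pm1)=2^{p}\,\sG(2^{p-1}\pm1)$, whose $2$-adic valuation is at least $p\geq 2$, while $2m$ has valuation exactly $1$. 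Your route is more elementary --- it needs only multiplicativity of $\sG$ and the definition of perfect, not Euler's classification (in effect you re-prove the fragment of Euler's theorem that the paper cites) --- at the cost of having to verify coprimality of $Q$ with the cofactor and to dispose of the tiny cases ($p=2$, where $2^{p-1}+1=Q$ and the candidate is $9$, not perfect) by hand. Your bookkeeping of those edge cases is slightly garbled in places (e.g. ``$n=3-\dG<0$'' in the mirror case should just be $n=3$, which is not perfect, and the $\bmod\ 4$ remark for $n=6$ does not by itself exclude $\dG=3$), but every such case reduces to checking that $3$ or $9$ is not perfect, so these are cosmetic rather than substantive defects.
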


\begin{proof} 
We may assume that $n$ is even. So $n=2^{p-1}\cdot Q$, where $Q=2^p-1$ is prime. Then $|\dG |=Q$ and so $n+\dG = Q(2^{p-1}\pm 1)$. But by Euler, we must have $Q^2\mid n+\dG$, a contradiction.
\end{proof}

Gathering everything together, we get:

\begin{proposition}
\label{tNonDiv}
If $n,m$ are distinct perfect numbers and if $\dG :=|n-m|$, then $\dG \nmid n$.
\end{proposition}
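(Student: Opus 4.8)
The plan is to deduce Proposition \ref{tNonDiv} directly from the three preceding lemmas by a case split on the parity of $\dG$. Write $\dG := |n-m|$ and, after interchanging $n$ and $m$ if necessary, assume $m < n$, so that $n = m + \dG$ and both $m$ and $m+\dG$ are perfect.

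First I would record the elementary observation that $\dG \mid n$ if and only if $\dG \mid m$: since $n - m = \dG$, each of the two divisibilities forces the other. Hence it suffices to prove $\dG \nmid m$, which is equivalent to $\dG \nmid n$; this also shows it is immaterial which of the two perfect numbers is fed into the lemmas, whose statements concern a pair of the shape $k$, $k+\dG$, so I may take $k = m$.

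By Lemma \ref{Dist>1} we have $\dG > 1$. Now split according to the parity of $\dG$. If $\dG$ is even, Lemma \ref{t even} applied to $m$ and $m+\dG$ yields $\dG \nmid m$. If $\dG$ is odd, then $\dG > 1$ forces $\dG \geq 3$, and Lemma \ref{todd} applied to $m$ and $m+\dG$ yields $\dG \nmid m$. These two cases are exhaustive, so $\dG \nmid m$ holds in all cases, whence $\dG \nmid n$, as claimed.

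I do not expect any genuine obstacle: the Proposition is simply the amalgamation of Lemmas \ref{Dist>1}, \ref{t even} and \ref{todd}. The only points deserving a moment's attention are the equivalence $\dG \mid n \Leftrightarrow \dG \mid m$ (which frees us from having to track which perfect number is which, and in the odd case lets us assume, exactly as in the proof of Lemma \ref{todd}, that the number being divided is the even member of the pair) and the remark that the even/odd dichotomy on $\dG$ is truly exhaustive once $\dG > 1$ has been secured.
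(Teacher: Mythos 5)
Your proposal is correct and matches the paper exactly: the Proposition is stated there with the preamble ``Gathering everything together, we get,'' i.e.\ it is precisely the combination of Lemmas \ref{Dist>1}, \ref{t even} and \ref{todd} via the parity split on $\dG$. Your extra observation that $\dG \mid n \Leftrightarrow \dG \mid m$ (so it does not matter which of the two perfect numbers plays the role of the smaller one in the lemmas) is a small but legitimate tidying-up that the paper leaves implicit.
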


\begin{remark}
If $n$, $m$ are perfect numbers, it is easy to show that $\dG :=|n-m|>2$.

Moreover if $\dG \equiv 3 \pmod{4}$ and if $n > 6$, it can be proved that $(\dG ,n)=1$.
\end{remark}

\section{Special values of the distance between two perfect numbers.}

As stated in the introduction, using known modular results we have:

\begin{remark}
\label{-+1mod12}
An integer $\dG \equiv \pm 1\pmod{12}$ can't be the distance between two perfect numbers.
\end{remark}

Indeed this follows from the fact that any even perfect number is $\equiv 6$ or $4\pmod{12}$ (Euler's theorem), while any odd perfect number is $\equiv 1$ or $9\pmod{12}$ by Touchard's theorem (\cite{T}, \cite{Hold}).
\vspace{0.3cm}

We introduce some notations:

In the sequel we denote by $\dG = b(b-1)/2$ a triangular number such that:

$b = 4m+2 = 48s+46$ ($m=12s+11$) and $s \not \equiv 2 \pmod{3}$.

Observe that $\dG =(24s+23)(48s+45) \equiv 3 \pmod{12}$.

\begin{lemma}
\label{I-gen}
Keep notations as above. If $n$ and $n+\dG$ are perfect numbers, then $n$ is odd and:
$$n = q^{4\bG +1}\cdot \prod p_u^{2\aG _u} = (2^{p-1}-2m-1)\cdot (2^p+4m+1)$$
where $Q = 2^p-1$ is prime, $q \equiv 1 \pmod{4}$ and where $q$ and the $p_u$'s are distinct odd primes.

Moreover if $l=8m+3$ is prime then:
\begin{equation}
\label{eq:p-1 I gen}
2^{p-1}-2m-1 = \prod p_i^{2a_i} =: u^2
\end{equation}
\begin{equation}
\label{eq: p I gen}
2^p+4m+1 = q^{4\bG +1}\cdot \prod p_j^{2a_j} =: qv^2
\end{equation}
and $q \equiv 5 \pmod{8}$.
\end{lemma}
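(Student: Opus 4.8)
The plan is to pin down the parities of $n$ and $n+\dG$ first, then extract the displayed factorization of $n$ from a one‑line algebraic identity, and finally, under the extra hypothesis that $l=8m+3$ is prime, read off the square structure from congruences modulo $3$ and $8$ together with Euler's form of an odd perfect number.

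For the parity, recall $\dG\equiv 3\pmod{12}$, that every even perfect number is $\equiv 4$ or $6\pmod{12}$ (Euler), and that every odd perfect number is $\equiv 1$ or $9\pmod{12}$ (Touchard). If $n$ and $n+\dG$ were both even then $n+\dG\equiv n+3\equiv 7$ or $9\pmod{12}$, impossible; and they cannot both be odd since $\dG$ is odd. If $n$ were even and $n+\dG$ odd, then $n+\dG\equiv 7$ or $9\pmod{12}$ forces $n+\dG\equiv 9$, hence $n\equiv 6\pmod{12}$, so $n=6$. This is exactly where the hypothesis $s\not\equiv 2\pmod 3$ enters: with $\dG=(2m+1)(4m+1)$ and $m=12s+11$ one gets $6+\dG=3\,(384s^2+728s+347)$ where $384s^2+728s+347\equiv 2(s+1)\not\equiv 0\pmod 3$, so $3$ divides $6+\dG$ exactly once; but an odd perfect number divisible by $3$ is divisible by $9$, since the only prime occurring to an odd exponent in its Euler factorization is $\equiv 1\pmod 4$, whereas $3\equiv 3\pmod 4$. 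So $n$ is odd and $n+\dG$ is even; then $n+\dG$ even is $\equiv 4\pmod{12}$ (the only common residue), forcing $n\equiv 1\pmod{12}$, and $n+\dG=2^{p-1}(2^p-1)=2^{p-1}Q$ with $Q=2^p-1$ prime (Euclid--Euler), while $n=q^{4\bG+1}\prod p_u^{2\aG_u}$ with $q\equiv 1\pmod 4$ (Euler).

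Next, since $\dG=b(b-1)/2$ with $b=4m+2$ we have $\dG=(2m+1)(4m+1)$; putting $x=2^{p-1}$, $y=2m+1$, so that $2x-1=2^p-1$ and $2y-1=4m+1$, the identity $x(2x-1)-y(2y-1)=(x-y)(2x+2y-1)$ gives $n=(n+\dG)-\dG=(2^{p-1}-2m-1)(2^p+4m+1)$, both factors being positive because $n>0$. For the ``moreover'', set $A=2^{p-1}-2m-1$, $B=2^p+4m+1$, so $n=AB$ and $B-2A=8m+3=l$. First $p\neq 2$ (else $n+\dG=6<\dG$, forcing $n<0$), so $p\geq 3$. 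Since $m\equiv 2\pmod 3$ we have $4m+1\equiv 0\pmod 3$, hence $B\equiv 2^p\equiv(-1)^p\equiv 2\pmod 3$, so $B$ is not a perfect square. As $\gcd(A,B)$ divides $l$, which is prime, and — crucially — $l\nmid A$, the factorizations of $A$ and $B$ partition that of $n$; the special prime $q$ goes entirely into one of them, and it cannot go into $A$, for then $B$ would be a perfect square. Therefore $q\mid B$, $A=\prod p_i^{2a_i}=:u^2$ and $B=q^{4\bG+1}\prod p_j^{2a_j}=:qv^2$. Finally $B\equiv 5\pmod 8$ (since $2^p\equiv 0$ and $4m+1=48s+45\equiv 5\pmod 8$) and $v^2\equiv 1\pmod 8$ ($v$ odd), so $q\equiv 5\pmod 8$.

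The one nonroutine ingredient is the coprimality of $A$ and $B$, i.e. excluding $l\mid 2^{p-1}-2m-1$ (equivalently $2^{p+1}\equiv 1\pmod l$); this is what makes the clean shape $A=u^2$ hold. Indeed, were $l$ to divide both $A$ and $B$, then $v_l(n)=v_l(A)+v_l(B)$ would be even (as $l\equiv 3\pmod 4$, so $l\neq q$) while $\min(v_l(A),v_l(B))=1$ (since $\gcd(A,B)\mid l$), forcing $v_l(A)$ odd, so $A$ could not be a perfect square. I expect excluding this case to require the joint perfectness of $n$ and $n+\dG$ (or the Mersenne primality of $Q=2^p-1$) rather than congruences alone, and it is the main obstacle in the lemma.
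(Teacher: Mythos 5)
Your parity argument (via Euler and Touchard mod $12$, plus the $s\not\equiv 2\pmod 3$ hypothesis to kill $n=6$) and the factorization $n=(2^{p-1}-2m-1)(2^p+4m+1)$ are correct and essentially what the paper does. The problem is the step you yourself flag at the end: you assert $l\nmid A$, i.e.\ $\gcd(A,B)=1$, without proof, and you conjecture that excluding $l\mid\gcd(A,B)$ requires input beyond congruences. That is the main content of the ``moreover'' part, so as written the proof is incomplete; and your guess about what it takes is off, since the paper disposes of it with exactly the mod $8$ bookkeeping you already use for $q\equiv 5\pmod 8$.

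Here is the missing argument. Suppose $l\mid\gcd(A,B)$. Since $l=8m+3\equiv 3\pmod 4$ we have $l\neq q$, so $v_l(n)$ is even, say $2a$; and since $\gcd(A,B)$ divides the prime $l$, we get $\gcd(A,B)=l$ exactly, hence $\min(v_l(A),v_l(B))=1$ and $v_l(B)\in\{1,2a-1\}$ is odd. Every other prime of $n$ lands entirely in one of $A$, $B$, so either $B=l^{c}\prod p_j^{2a_j}$ or $B=l^{c}q^{4\bG+1}\prod p_j^{2a_j}$ with $c$ odd. Reducing mod $8$, using $l\equiv 3$, $q^{4\bG+1}\equiv q$, odd squares $\equiv 1$, and $B=2^p+4m+1\equiv 5\pmod 8$, the first case gives $3\equiv 5$ and the second gives $3q\equiv 5$, i.e.\ $q\equiv 7\pmod 8$, contradicting $q\equiv 1\pmod 4$. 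So $\gcd(A,B)=1$. Once that is in place, your observation that $B\equiv 2\pmod 3$ is not a square is a perfectly valid (and mildly different) way to force $q\mid B$ and $A=u^2$; the paper instead reuses $B\equiv 5\pmod 8$ for that step. In short: correct skeleton, one genuine gap, and the gap closes with the tools you already deployed elsewhere in your write-up.
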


\begin{proof} First let us show that $n$ is odd and $n+\dG$ is even. Since $\dG \equiv 3 \pmod{4}$, this is clear if $n > 6$. If $n=6$, $6+\dG$ is odd. Since $\dG \equiv 0 \pmod{3}$, $3^2 \mid 6+\dG$, but this is impossible since $s \not \equiv 2\pmod{3}$.
 
So $n +\dG = Q(Q+1)/2$ where $Q = 2^p-1$ is prime and $n=q^{4\bG +1}\cdot \prod p_u^{2\aG _u}$. We have:
$$\frac{Q(Q+1)}{2}- \frac{b(b-1)}{2}=\frac{(Q+b)(Q-b+1)}{2}$$
It follows that $n = A\cdot B$ with $A= (2^{p-1}-2m-1)$, $B= (2^p+4m+1)$. Since $B-2A = 8m+3 =l$, the g.c.d., $G$, of $A, B$ is $1$ or $l$ since $l$ is prime by assumption.

If $G=l$, since $l\equiv 3 \pmod{4}$, $l \neq q$. If $q \nmid B$, then $B = 2^p+4m+1 = l^c\cdot \prod p_j^{2a_j}$ with $c \in\{1,2a-1\}$ ($l^{2a}\parallel n$). Since, by assumption, $m \equiv 3 \pmod{4}$, $2^p+4m+1 \equiv 5 \pmod{8}$. Since $l\equiv 3 \pmod{8}$ and $c$ is odd, we get a contradiction. We conclude that $q \mid B$, hence $B =l^c\cdot q^{4\bG +1}\cdot \prod p_j^{2a_j}$ and we get: $5 \equiv B \equiv 3\cdot q^{4\bG +1}\equiv 3\cdot q \pmod{8}$. Since $q\equiv 1 \pmod{4}$, this is impossible. This shows that $G=1$. 

So one of $A,B$ is a square and the other is divisible by $q^{4\bG +1}$. Assume $B = 2^p+4m+1 = \prod p_j^{2a_j}$, then $5 \equiv 2^p+4m+1 \equiv \prod p_j^{2a_j} \equiv 1 \pmod{8}$, so $A$ is a square, $q \mid B$ and $q \equiv 5 \pmod{8}$.
\end{proof}

Now we have:

\begin{theorem}
\label{Thm-infty}
There exist infinitely many triangular numbers $\dG$, with $\dG \equiv 3\pmod{12}$, such that the equation $x - y=\dG$ has no solution with $x$ and $y$ both perfect numbers.
\end{theorem}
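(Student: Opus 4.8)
The plan is to derive the theorem from Lemma~\ref{I-gen} by killing equation~(\ref{eq:p-1 I gen}) with a single congruence modulo~$3$, and then to use Dirichlet's theorem to obtain infinitely many admissible values of the parameter $s$.

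First I would show that equation~(\ref{eq:p-1 I gen}) can never hold. In the situation of Lemma~\ref{I-gen} we have $n+\dG=Q(Q+1)/2$ with $Q=2^p-1$ prime, so $p$ is prime; and $p=2$ would give $n+\dG=6$, hence $n=6-\dG<0$ (since $\dG=(24s+23)(48s+45)>6$), which is absurd. Thus $p$ is an \emph{odd} prime, whence $2^{p-1}\equiv 1\pmod 3$. Since $2m+1=24s+23\equiv 2\pmod 3$, we get $2^{p-1}-2m-1\equiv 1-2\equiv 2\pmod 3$; but the right-hand side of~(\ref{eq:p-1 I gen}) is a perfect square, and no square is $\equiv 2\pmod 3$. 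This contradiction shows that, as soon as $l=8m+3$ is prime, there is no integer $n$ for which $n$ and $n+\dG$ are both perfect.

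It then remains to produce infinitely many values of $s$ with $s\not\equiv 2\pmod 3$ and $l=8m+3$ prime. I would simply take $s=3t$ (so that $s\equiv 0\pmod 3$ is automatic), for which $l=288t+91$. Since $\gcd(288,91)=1$, Dirichlet's theorem on primes in arithmetic progressions supplies infinitely many $t$ with $288t+91$ prime. For each such $t$, Lemma~\ref{I-gen} applies to the triangular number $\dG=b(b-1)/2$ with $b=48s+46$, which (as recorded before Lemma~\ref{I-gen}) satisfies $\dG\equiv 3\pmod{12}$, and the previous step shows that $x-y=\dG$ has no solution in perfect numbers. As $\dG$ grows with $t$, these are infinitely many distinct such $\dG$, and the theorem follows.

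I do not anticipate a genuine difficulty: the entire point is the mod~$3$ clash between ``$2^{p-1}-2m-1$ is a square'' and ``$24s+23\equiv 2\pmod 3$''. The remaining steps are routine bookkeeping — checking that the choice $s=3t$ meets every hypothesis of Lemma~\ref{I-gen}, and the standard appeal to Dirichlet for the progression $288t+91$; if one wanted a slightly larger supply of $\dG$ one could instead allow both $s\equiv 0$ and $s\equiv 1\pmod 3$ and invoke Dirichlet in the modulus~$288$.
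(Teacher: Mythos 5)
Your proposal is correct, and it follows the same skeleton as the paper's proof: invoke Lemma~\ref{I-gen}, restrict to $s\equiv 0\pmod 3$ so that $l=96s+91$ lands in the progression $288t+91$ with $\gcd(288,91)=1$, apply Dirichlet, and derive a contradiction modulo~$3$. The one genuine difference is in how the contradiction is extracted. The paper adds equations~(\ref{eq:p-1 I gen}) and~(\ref{eq: p I gen}) to get $3\cdot 2^{p-1}+2m=x^2+qy^2$, reduces mod~$3$, and then runs a three-case analysis on $x^2,y^2\bmod 3$, ending with $q=3$ being incompatible with $q\equiv 1\pmod 4$. You instead observe that equation~(\ref{eq:p-1 I gen}) alone is already impossible: since $p$ is an odd prime (the case $p=2$ being excluded by size), $2^{p-1}\equiv 1\pmod 3$, while $2m+1=24s+23\equiv 2\pmod 3$, so $u^2=2^{p-1}-2m-1\equiv 2\pmod 3$, which no square satisfies. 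This is a clean simplification of the endgame --- it makes equation~(\ref{eq: p I gen}) and all information about $q$ superfluous at this stage (in effect, it is the complete version of the paper's first bullet, which only rules out $x^2\equiv 0$ rather than computing $x^2\bmod 3$ outright). All the supporting details you check (primality of $p$, the hypothesis $s\not\equiv 2\pmod 3$, the congruence $\dG\equiv 3\pmod{12}$, and the growth of $\dG$ with $t$) are handled correctly, so the argument is complete.
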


\begin{proof} We keep notations and assumptions of Lemma \ref{I-gen}. We have $l=8m+3 = 96s+91$. If $s=3a$, $l=288a+91$; if $s=3a+1$, $l=288a+187$. Since $(288,91)=1$ and $(288,187)=1$, by Dirichlet's theorem there are infinitely many values of $s$ for which $l$ is prime.

If $l$ is prime, by Lemma \ref{I-gen}: $2^{p-1}-2m-1 = x^2$ and $2^p+4m+1 = qy^2$. Adding these two equations we get: 
$$3\cdot 2^{p-1}+2m = x^2+qy^2$$
it follows, since $m\equiv 2 \pmod{3}$, that:
$$1 \equiv x^2+qy^2 \pmod{3}$$
\begin{itemize}
\item If $x^2\equiv 0 \pmod{3}$: then $2^{p-1}-2m-1 \equiv 0 \pmod{3}$ and it follows that $2^{p-1}\equiv 2 \pmod{3}$, which is impossible because $p-1$ is even.
\item If $y^2 \equiv 0 \pmod{3}$: then $2^p+4m+1 \equiv 0 \pmod{3}$, which implies $2^p\equiv 0 \pmod{3}$: impossible.
\item We conclude that $x^2\equiv y^2 \equiv 1 \pmod{3}$, but then we get $q\equiv 0 \pmod{3}$, i.e. since $q$ is prime $q=3$, which is also impossible because $q \equiv 1 \pmod{4}$
\end{itemize}
We conclude that if $\dG = (24s+23)\cdot (48s+45)$ and if $l=96s+91$ is prime, then $|x-y|=\dG$ has no solutions with both $x$ and $y$ perfect numbers.
\end{proof}

\begin{remark}
Of course we can get other examples. If $b=30, m=7$, then $8m+3=59$ is prime. Since $\dG =435$, $n$ is odd and Lemma \ref{I-gen} applies. Hence it suffices to show that $2^{p-1}-15$ is not a square. Now if $2^{p-1}-15 =x^2$, we have $(2^{\frac{p-1}{2}} -x)(2^{\frac{p-1}{2}} +x) = 15 = 3\cdot 5$ and it follows that $p=5$ or $p=7$. Since these values of $p$ are easily excluded in our context, we conclude that $\dG = 435$ can't be the distance between two perfect numbers.
\vspace{0.3cm}

The question of whether an odd triangular number can be the distance between two perfect numbers remains open.
\end{remark}

\end{document}